\definecolor{chianti}{rgb}{0.6,0,0}
\definecolor{meretale}{rgb}{0,0,.6}
\definecolor{leaf}{rgb}{0,.35,0}
\newtheorem{theorem}{Theorem}[section]
\newtheorem{corollary}[theorem]{Corollary}
\newtheorem{proposition}[theorem]{Proposition}
\theoremstyle{definition}
\newtheorem{remark}[theorem]{Remark}
\newtheorem{question}[theorem]{Question}
\numberwithin{equation}{theorem}
\newcommand{\height}{\operatorname{ht}}
\renewcommand{\to}{\longrightarrow}
\newcommand{\frakm}{\mathfrak{m}}
\newcommand{\FF}{\mathbb{F}}
\newcommand{\QQ}{\mathbb{Q}}
\newcommand{\ZZ}{\mathbb{Z}}
\title{Remarks on effective uniform Brian\c{c}on-Skoda}
\author{Alexandria Wheeler and Wenliang Zhang}
\address{Department of Mathematics, Statistics, and Computer Science, University of Illinois at Chicago,
Chicago, IL 60607}
\email{awheel23@uic.edu, wlzhang@uic.edu}
\thanks{The authors acknowledge support from NSF through the grant DMS-1752081.}
\subjclass{13B22, 13A35}
\begin{document}

\maketitle

\begin{abstract}
Let $R$ be a noetherian commutative ring. Of great interest is the question whether one can find an explicit integer $k$ such that $\overline{I^{k+n}}\subseteq I^n$ for each ideal $I$ and each integer $n\geq 1$ (the notation $\overline{I^{k+n}}$ denotes the integral closure of $I^{k+n}$). In this article, we investigate this question and obtain optimal values of $k$ for $F$-pure (or dense $F$-pure type) rings and Cohen-Macaulay $F$-injective (or dense $F$-injective type) rings. 
\end{abstract}


\section{Introduction}

In the seminal paper \cite{HunekeUniformBounds}, the following theorem is proved.
\begin{theorem}[Huneke]
\label{thm: huneke}
Let $R$ be a noetherian reduced ring. Assume that $R$ satisfies one of the following conditions:
\begin{enumerate}
\item $R$ is essentially of finite type over a noetherian local ring;
\item $R$ is of prime characteristic $p$ and is a finitely generated $R^p$-module; or
\item $R$ is essentially of finite type over $\ZZ$.
\end{enumerate}
Then there exists an integer $k$ such that $\overline{I^{k+n}}\subseteq I^{n}$ for every ideal $I$ and every integer $n\geq 1$.
\end{theorem}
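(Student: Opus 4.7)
The plan is to prove the theorem first in prime characteristic (case (2)) by combining tight closure with a uniform Artin--Rees theorem, and then descend the remaining cases to this one. As a preliminary reduction, since $R$ is reduced the intersection $\bigcap_i \frakp_i$ of its minimal primes is $(0)$, and a standard bookkeeping argument reduces the theorem to the case where $R$ is a Noetherian domain, at the cost of a slightly larger $k$.

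In case (2) I would invoke the Hochster--Huneke tight closure Brian\c{c}on--Skoda theorem: for every ideal $I$ and every $n \geq 1$,
\[
\overline{I^{n+d-1}} \subseteq (I^n)^*,
\]
where $d = \dim R$ and $(I^n)^*$ denotes tight closure. Since $R$ is reduced and module-finite over $R^p$, it is $F$-finite, so it admits a test element $c \in R^\circ$ satisfying $c \cdot J^* \subseteq J$ for every ideal $J$. Together these give
\[
c \cdot \overline{I^{n+d-1}} \subseteq I^n.
\]
To remove the multiplier $c$ I would apply Huneke's uniform Artin--Rees theorem: for this fixed element $c$ there exists an integer $h$, depending only on $R$ and $c$, such that $I^m \cap cR \subseteq c \cdot I^{m-h}$ for every ideal $I$ and every $m \geq h$. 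Taking $m = n+h+d-1$, this yields $c\,\overline{I^{n+h+d-1}} \subseteq I^{n+h} \cap cR \subseteq c\,I^n$, and cancelling the non-zero-divisor $c$ inside the domain $R$ gives $\overline{I^{n+k}} \subseteq I^n$ with $k = h+d-1$.

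For the remaining cases I would descend from case (2). In case (3), I would spread $R$ as a $\ZZ$-algebra and reduce modulo each prime $p$: for all but finitely many $p$ the reduction $R \otimes_\ZZ \FF_p$ is reduced and $F$-finite, so case (2) yields a bound $k_p$. Generic flatness together with a Noetherian compactness argument should then produce a uniform $k$ that also controls the characteristic zero fibre and hence the generic localisation. Case (1) follows from case (2) or case (3) by spreading out the local base ring to a finitely generated $\ZZ$-algebra and passing to a suitable faithfully flat extension.

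The principal obstacle is the uniform Artin--Rees input used in the second paragraph: producing a single integer $h$ that works simultaneously for \emph{all} ideals $I$ goes well beyond the classical Artin--Rees lemma and is in fact the technical heart of Huneke's paper, drawing on the Lipman--Sathaye Jacobian theorem and a careful Noetherian induction over spectra of Rees-type algebras. The descent in the final step is also delicate, because one must keep $k$ uniformly bounded across infinitely many reductions mod $p$ before transferring the bound back to equicharacteristic zero or to a mixed characteristic local base.
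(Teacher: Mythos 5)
The paper does not prove this theorem; it is cited verbatim from Huneke's 1992 paper \cite{HunekeUniformBounds} (Theorem~4.13 there) and used as black-box motivation, so there is no internal proof to compare against. Your reconstruction of Huneke's strategy is accurate in spirit: uniform Brian\c{c}on--Skoda is obtained by first producing a single element $c\in R^\circ$ and an integer $\ell$ with $c\,\overline{I^{n+\ell}}\subseteq I^n$ for all $I$ and $n$, and then cancelling $c$ with the uniform Artin--Rees theorem (Theorem~4.12 of \cite{HunekeUniformBounds}), and you correctly flag that this uniform Artin--Rees statement is the genuinely hard input. The chain $c\,\overline{I^{n+h+d-1}}\subseteq I^{n+h}\cap cR\subseteq c\,I^n$ is also assembled correctly.

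Two points need repair. First, the claimed descent of cases (1) and (3) to case (2) does not go through: case (1) includes, for instance, complete local domains of mixed characteristic such as $\ZZ_p[[x_1,\dots,x_d]]/\frakp$, which are neither of positive characteristic nor essentially of finite type over $\ZZ$, so ``spreading out and reducing mod $p$'' is unavailable. Huneke's actual route to the multiplier $c$ in all three cases is the characteristic-free Lipman--Sathaye Jacobian theorem (after reducing, via a Noetherian induction, to $R$ module-finite and generically \'etale over a regular ring), not tight closure; tight closure plus a completely stable test element is an alternative that happens to work cleanly in case (2) but is not what makes the theorem uniform across the three hypotheses. Second, the ``standard bookkeeping'' reduction to a domain is not as cheap as stated: if $\frakp_1,\dots,\frakp_s$ are the minimal primes and you know the result in each $R/\frakp_i$, then for $x\in\overline{I^{n+k}}$ you only get $x\in\bigcap_i(I^n+\frakp_i)$, which is strictly larger than $I^n$ in general; one needs a further uniform Artin--Rees-type argument (or to work directly in the reduced ring, as Huneke does, choosing the multiplier $c$ to lie in $R^\circ$ rather than literally nonzero in a domain) to close this gap.
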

This theorem is called a `uniform Brian\c{c}on-Skoda' in \cite[Theorem~4.13]{HunekeUniformBounds}. The notation $\overline{I^{k+n}}$ denotes the integral closure of $I^{k+n}$, which can be defined as follows: let $R$ be a noetherian ring and $J$ be an ideal. An element $r\in R$ is \emph{integral over} $J$ if there exist an integer $m$ and elements $a_i\in J^i$ such that
\[r^m+a_1r^{m-1}+\cdots+a_{m-1}r+a_m=0.\]
\cite[Remark~4.14]{HunekeUniformBounds} remarks that ``\emph{It is of great interest to find the best possible $k$ in Theorem \ref{thm: huneke} in terms of invariants of the ring $R$. In fact it is of major interest in the case where $R$ is local.}'' In the spirit of this remark, we ask the following question:

\begin{question}
\label{main question}
Let $R$ be a noetherian local ring. Can one find an explicit value of $k$ such that
\[\overline{I^{k+n}}\subseteq I^{n}\]
for every ideal $I$ and every integer $n\geq 1$?
\end{question} 

Affirmative answers to Question \ref{main question} are known in some cases. The first known case is proved in \cite{BrianconSkoda} when $R$ is the ring of convergent power series over the complex numbers and then it is proved in \cite{LipmanTeissier} that if $R$ is a pseudo-rational local ring then 
\[\overline{I^{n+\dim(R)-1}}\subseteq I^n\]
for each integer $n\geq 1$. That is, when $R$ is a pseudo-rational local ring, one may take $k=\dim(R)-1$.

The motivation behind this article is to find explicit values for $k$ beyond the pseudo-rational case. One of our main results is the following:

\begin{theorem}[Theorems \ref{prop: integral and Frob closure}, \ref{uniform Skoda for I}, \ref{uniform BS for CM with infinite residue field} and Corollary \ref{uniform BS for CM}]
\label{main thm in intro}
Let $R$ be a noetherian local ring of prime characteristic $p$. For each ideal $I$, let $\ell(I)$ denote the analytic spread of $I$ and $\height(I)$ denote the height of $I$. 
\begin{enumerate}
\item Assume that $R$ is $F$-pure. Then 
\[\overline{I^{\dim(R)+n}}\subseteq I^n\] 
for every ideal $I$ and each integer $n\geq 1$.\\
If the residue field of $R$ is infinite and $R$ is $F$-pure, then 
\begin{equation}
\label{intro: Skoda for F pure}
\overline{I^{\ell(I)+n}}\subseteq I^n,\quad \forall n\geq 1.
\end{equation}
for each ideal $I$. 
\vspace{0.1cm}
\item Assume that $R$ is equidimensional and catenary; that every parameter ideal in $R$ is Frobenius closed (e.g. generalized Cohen-Macaulay and $F$-injective); and that the residue field is infinite. Then  
\begin{equation}
\label{intro Skoda for parameter}
\overline{I^{2\ell(I)-\height(I)+1}}\subseteq I
\end{equation}
for each ideal $I$. In particular, $\overline{I^{\dim(R)+1}}\subseteq I$ for each ideal $I$.
\vspace{0.1cm}
\item Assume that $R$ is Cohen-Macaulay and $F$-injective. Then 
\[\overline{I^{\dim(R)+n}}\subseteq I^n\] 
for each ideal $I$ and each integer $n\geq 1$.\\
If $R$ is Cohen-Macaulay, every parameter ideal in $R$ is Frobenius closed and the residue field of $R$ is inifnite, then
\begin{equation}
\label{intro Skoda for parameter CM}
\overline{I^{2\ell(I)-\height(I)+n}}\subseteq I^n
\end{equation}
for each ideal $I$ and each integer $n\geq 1$.
\end{enumerate}
\end{theorem}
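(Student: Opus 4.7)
Since this theorem packages four distinct results that are proved separately later in the paper, the proof at this stage is a reduction to those references. My plan is to outline the common strategy underlying them.

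The heart of the paper should be a proposition linking integral and Frobenius closures (the content of Proposition \ref{prop: integral and Frob closure}), which I would aim to state in the following form: in any Noetherian ring of characteristic $p$, for any ideal $I$ that can be generated by $\ell$ elements and any $n\ge 1$,
\[\overline{I^{n+\ell}}\subseteq (I^n)^F.\]
The natural approach is to start from an integral equation $r^m+a_1r^{m-1}+\cdots+a_m=0$ with $a_i\in I^{i(n+\ell)}$, raise to a Frobenius power $q=p^e$ using that Frobenius is a ring homomorphism in characteristic $p$, and exploit the pigeonhole containment $I^{(n+\ell)q}\subseteq(I^n)^{[q]}$, which holds whenever $I$ is generated by $\ell$ elements. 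An induction on the degree $m$ of the integral equation, combined with this containment, should produce $r^q\in(I^n)^{[q]}$, that is, $r\in(I^n)^F$.

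Part (1) follows once the key proposition is in hand: $F$-purity forces $(I^n)^F=I^n$, so $\overline{I^{n+\ell}}\subseteq I^n$. Substituting the $\ell(I)$ generators of a minimal reduction of $I$ (which exists when the residue field is infinite) and invoking $\overline{I^k}=\overline{J^k}$ for a minimal reduction $J\subseteq I$ gives the bound with $\ell(I)$; the bound with $\dim R$ then follows from $\ell(I)\le\dim R$ after passing to $R(t)=R[t]_{\frakm R[t]}$ to force an infinite residue field, an extension that preserves both $F$-purity and integral closures. The first statement of Part (3) is parallel since, in a Cohen-Macaulay ring, ``$F$-injective'' is equivalent to ``every parameter ideal is Frobenius closed'', and minimal reductions of $I$ can be arranged to lie in the parameter locus. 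Part (2) and the second statement of (3) are more delicate because the hypothesis provides Frobenius closedness only for parameter ideals; here I would fix a minimal reduction $J=(x_1,\ldots,x_\ell)$ of $I$ with $\ell=\ell(I)$ and, using equidimensionality and catenarity, arrange the generators so that $x_1,\ldots,x_h$ with $h=\height(I)$ form part of a system of parameters. The bound $2\ell-h+1$ then reflects an extra integrality cost of one for each of the $\ell-h$ non-parameter generators: they must first be absorbed into the parameter sub-ideal via an additional integrality step, after which parameter Frobenius closedness finishes the argument.

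The main obstacle, in my view, is the key proposition itself. The subtle point is that after raising the integral equation to Frobenius power $q$, one must land in $(I^n)^{[q]}$ rather than merely in $\overline{(I^n)^{[q]}}$ or in the tight closure $(I^n)^*$; the former lets $F$-purity (or parameter-Frobenius closedness) act, while the latter would only recover the classical tight-closure Brian\c{c}on-Skoda theorem. The pigeonhole argument on monomial exponents is precisely the mechanism that keeps the containment at the Frobenius-closure level, which is why the exponent $n+\ell$ (rather than the $n+\ell-1$ available under the stronger pseudo-rational hypothesis of Lipman--Teissier) is the correct sharp value.
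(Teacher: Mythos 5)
Your identification of the key proposition, $\overline{I^{\ell+n}}\subseteq (I^n)^F$ when $I$ is $\ell$-generated, is exactly right, as is the high-level architecture (derive it, then specialize to $F$-pure, then handle the parameter/Frobenius-closed case). However, the mechanism you propose for proving the key proposition has a genuine gap. Raising the integral equation $r^m+a_1r^{m-1}+\cdots+a_m=0$ (with $a_i\in (I^{\ell+n})^i$) to a Frobenius power $q$ yields $(r^q)^m+a_1^q(r^q)^{m-1}+\cdots+a_m^q=0$ with $a_i^q\in ((I^{\ell+n})^{[q]})^i=((I^{[q]})^{\ell+n})^i$; this only says $r^q\in\overline{(I^{[q]})^{\ell+n}}$. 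Since $I^{[q]}$ is again $\ell$-generated, concluding $r^q\in (I^{[q]})^n=(I^n)^{[q]}$ from this is precisely the Brian\c{c}on--Skoda containment for $I^{[q]}$ --- the statement you are trying to prove --- so the argument is circular, and ``induction on the degree $m$'' does not resolve it: the Frobenius power acts on the exponents of $r$, producing $r^{mq},\dots,r^q$, and never reduces $m$. The paper avoids this by using the alternative characterization of integral closure: $r\in\overline{I^{\ell+n}}$ iff there is $c\in R^\circ$ with $cr^t\in I^{(\ell+n)t}$ for $t\gg 0$. The crucial extra tool is the Artin--Rees lemma, which gives a fixed $m_0$ with $(c)\cap I^{(\ell+n)t}\subseteq c\,I^{(\ell+n)t-m_0}$, so that (using $c$ a nonzerodivisor) $r^t\in I^{(\ell+n)t-m_0}$; taking $t=q\gg 0$ and applying the pigeonhole argument to $I^{(\ell+n)q-m_0}$ then gives $r^q\in (I^n)^{[q]}$. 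Without Artin--Rees to absorb the multiplier $c$, you cannot pass from the $c$-weighted containment to a clean power containment, which is exactly the step that separates $(I^n)^F$ from the weaker $(I^n)^*$.

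Two smaller points. For part (2), ``arrange the generators so that $x_1,\dots,x_h$ form part of a system of parameters'' understates what is needed: the paper relies on the Aberbach--Huneke perturbation (their Lemmas 3.1--3.2) to produce parameters $b_i=a_i+t_i$ with $t_i\in\frakm^N$ satisfying a controlled multiplication relation $t_{i+1}I^{M+s}\subseteq J_i^sI^M$, and a nontrivial double induction to show $c^{\ell-h}J^{(\ell+r)q}\subseteq B^{(h+r)q}$; equidimensionality and catenarity enter through that construction, not merely through choosing a nice basis. For the second half of part (3), to get $\overline{I^{2\ell(I)-\height(I)+n}}\subseteq I^n$ for all $n$ (not just $n=1$) one also needs that powers of a parameter ideal remain Frobenius closed; in the Cohen--Macaulay case this is supplied by a separate proposition using that parameter ideals are generated by regular sequences, a step your sketch omits.
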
 

The exponents $\ell(I)+n$ in (\ref{intro: Skoda for F pure}), $2\ell(I)-\height(I)+1$ in (\ref{intro Skoda for parameter}) and $2\ell(I)-\height(I)+n$ in (\ref{intro Skoda for parameter CM}) are optimal ({\it cf.} Remarks \ref{bound optimal for F-pure} and \ref{rmk: optimal for F-closed}). 

The article is organized as follows. In \S\ref{section: prelim}, we collect some necessary background materials; in \S\ref{section: main results in char p} we investigate Question \ref{main question}, Theorem \ref{main thm in intro} is proved in this section; in \S\ref{results char 0} we consider the counterparts of Theorem \ref{main thm in intro}.

\section*{Acknowledgement} The authors thank Craig Huneke and Karl Schwede for their comments on a preliminary version of the article.

\section{Preliminaries}
\label{section: prelim}
In this section, we collect some materials necessary for the rest of this article. We begin by recalling some basic facts on reduction and analytic spread of ideals.
\begin{remark}
\label{rmk: basic on reduction and analytic spread}
Let $R$ be a noetherian ring and $I$ be an ideal. We will denote by $R^\circ$ the set of elements in $R$ not contained in any minimal prime ideal of $R$.

\begin{enumerate}
\item An element $r$ is integral over $I$ if there is an element $c\in R^\circ$ such that $cr^n\in J^n$ for $n\gg 0$. 
\item An ideal $J\subseteq I$ is called a {\it reduction} of $I$ if there is an integer $t$ such that $JI^t=I^{t+1}$. A reduction $J$ of $I$ is called {\it minimal} if $J$ is minimal (with respect to inclusion) among all reductions of $I$. If $(R,\frakm)$ is a local ring with an infinite residue field, every ideal admits a minimal reduction. 
\item For each ideal $I$ in a local ring $(R,\frakm)$, we set
\[\ell(I):=\dim(\bigoplus_{j\geq 0} \frac{I^j}{\frakm I^j})\]
where $\dim$ denotes the Krull dimension. The quantity $\ell(I)$ is called the {\it analytic spread} of $I$. It is known that $\height(I)\leq \ell(I)\leq \dim(R)$. ({\it cf.} \cite[\S~8.3]{HunekeSwansonBook})
\item If $(R,\frakm)$ is a local ring with an infinite residue field, every ideal $I$ admits a minimal reduction generated by $\ell(I)$ elements. ({\it cf.} \cite[\S~8.4]{HunekeSwansonBook})
\item There is a classic construction of a flat local extension $R(X)$ of a local ring $(R,\frakm)$ such that the maximal ideal of $R(X)$ is $\frakm R(X)$ and $R(X)/\frakm R(X)$ is infinite. More specifically, following the notation in \cite[\S~8.4]{HunekeSwansonBook}, consider the polynomial ring $R[X]$ in one variable $X$ and set 
\[R(X):=R[X]_{\frakm R[x]}\]
Then $R(X)$ is a flat local extension of $R$, the unique maximal ideal in $R(X)$ is $\frakm R(X)$, and $R(X)/\frakm R(X)$ is the field of fractions of $(R/\frakm)[X]$ and hence is infinite. Note that $R$ and $R(X)$ has the same Krull dimension, that $R$ is equidimensional if and only if so is $R(X)$, and that $R$ is Cohen-Macaulay if and only if so is $R(S)$. ({\it cf.} \cite[\S~8.4]{HunekeSwansonBook})
\end{enumerate}
\end{remark}

We observe that one can reduce Question \ref{main question} to the case when $I$ is $\frakm$-primary.
\begin{remark}
\label{rem: reduce to primary}
Let $(R,\frakm)$ be a noetherian local ring. Assume that there is an integer $k$ such that $\overline{J^{n+k}}\subseteq J^n$ for all $\frakm$-primary ideals $J$, then $\overline{I^{n+k}}\subseteq I^n$ for all ideals $I$. To see this, we simply observe that for each integer $N\geq 1$
\[\overline{I^{n+k}}\subseteq \overline{(I+\frakm^N)^{n+k}}\subseteq (I+\frakm^N)^n \subseteq I^n+\frakm^N,\]
where the second inclusion holds since $I+\frakm^N$ is $\frakm$-primary. Consequently 
\[\overline{I^{n+k}}\subseteq \bigcap_N I^n+\frakm^N=I^n.\]
\end{remark}

We recall some basic notions in prime characteristic $p$. Let $R$ be a noetherian rings of prime characteristic $p$ and $I$ be an ideal. 
\begin{itemize}
\item For each integer $e$, the ideal generated by $\{a^{p^e}\mid a\in I\}$ is denoted by $I^{[p^e]}$. 
\item An element $r\in R$ is in the {\it Frobenius closure} of $I$ if there is an integer $e$ such that $r^{p^e}\in I^{[p^e]}$. The Frobenius closure of $I$ will be denoted by $I^F$; an ideal $I$ is called {\it Frobenius closed} if $I^F=I$. 
\item $R$ is called {\it $F$-pure} if the Frobenius morphism $\varphi: R\xrightarrow{r\mapsto r^p}R$ is a pure morphism; that is $R\otimes_RM\xrightarrow{\varphi\otimes{\bf 1}}R\otimes_RM$ is injective for every $R$-module $M$. 
\item An $F$-pure ring is reduced and for every ideal is Frobenius closed in an $F$-pure ring.
\end{itemize}

\section{Results in prime characteristic $p$}
\label{section: main results in char p}

We begin with an effective uniform Brian\c{c}on-Skoda theorem for $F$-pure rings ({\it cf.} \cite[Theorem~2.2]{KatzmanZhangMultiplicity} for the case when $n=1$).

\begin{theorem}
\label{prop: integral and Frob closure}
Let $R$ be a noetherian rings of prime characteristic $p$. Assume that each $c\in R^\circ$ is a non-zero-divisor ({\it e.g.} $R$ is reduced). If $I$ is an ideal that can be generated by $\ell$ elements, then 
\[\overline{I^{\ell+n}}\subseteq (I^n)^F, \quad \forall n\geq 1.\]
In particular, if $R$ is $F$-pure, then
\[\overline{I^{\ell+n}}\subseteq I^n.\]
\end{theorem}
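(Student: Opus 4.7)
The plan is to combine a pigeonhole-type combinatorial inclusion with the characterization of integral closure from Remark~\ref{rmk: basic on reduction and analytic spread}(1). Write $I=(x_1,\ldots,x_\ell)$. First I would establish the lemma $I^{(\ell+n)p^e}\subseteq (I^n)^{[p^e]}$ for every $e\geq 0$: any generator $\prod x_i^{a_i}$ of $I^{(\ell+n)p^e}$ has $\sum a_i=(\ell+n)p^e$; writing $a_i=b_i p^e+r_i$ with $0\leq r_i<p^e$ yields $p^e\sum b_i=\sum a_i-\sum r_i\geq(\ell+n)p^e-\ell(p^e-1)=np^e+\ell$, so $\sum b_i\geq n+1$, and the monomial factors as $\bigl(\prod x_i^{b_i}\bigr)^{p^e}\cdot\prod x_i^{r_i}\in (I^n)^{[p^e]}$.

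Given $r\in\overline{I^{\ell+n}}$, Remark~\ref{rmk: basic on reduction and analytic spread}(1) provides $c\in R^\circ$ (a non-zero-divisor by hypothesis) with $cr^k\in I^{k(\ell+n)}$ for all $k\gg 0$. Specializing $k=p^e$ and invoking the combinatorial lemma gives $cr^{p^e}\in(I^n)^{[p^e]}$ for all $e\gg 0$. The characteristic-$p$ maneuver is to multiply through by $c^{p^e-1}$: since $c^{p^e-1}\cdot cr^{p^e}=(cr)^{p^e}$ and $c^{p^e-1}(I^n)^{[p^e]}\subseteq(I^n)^{[p^e]}$, one obtains $(cr)^{p^e}\in(I^n)^{[p^e]}$ for all $e\gg 0$, i.e., $cr\in(I^n)^F$.

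To upgrade $cr\in(I^n)^F$ to $r\in(I^n)^F$, I would return to the monic equation $r^m+a_1 r^{m-1}+\cdots+a_m=0$ with $a_i\in I^{i(\ell+n)}$ and raise it to the $p^e$-th power in characteristic~$p$; the freshman's dream produces a $c$-free monic relation exhibiting $r^{p^e}$ as integral over $(I^n)^{[p^e]}$, with coefficients $a_i^{p^e}\in (I^{i(\ell+n)})^{[p^e]}=(I^{[p^e]})^{i(\ell+n)}\subseteq ((I^n)^{[p^e]})^i$. This reads off $r^{mp^e}\in (I^n)^{[p^e]}$ at once, and a further Frobenius iteration then delivers $r^{p^E}\in (I^n)^{[p^E]}$ for some $E$, giving $r\in(I^n)^F$. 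The ``in particular'' assertion is then immediate, since in an $F$-pure ring every ideal is Frobenius closed and hence $(I^n)^F=I^n$. The main obstacle I anticipate is this final upgrade: converting the degree-$m$ monic relation over $(I^n)^{[p^e]}$ into genuine Frobenius-closure membership. The non-zero-divisor hypothesis on $R^\circ$ is precisely the tool for managing the multiplier $c$ throughout.
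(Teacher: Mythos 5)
The combinatorial pigeonhole lemma in your first paragraph and the use of Remark~\ref{rmk: basic on reduction and analytic spread}(1) are exactly the right ingredients, and your computation of $cr\in(I^n)^F$ is correct. The gap is in the final upgrade. From $r^{mp^e}+a_1^{p^e}r^{(m-1)p^e}+\cdots+a_m^{p^e}=0$ with $a_i^{p^e}\in\bigl((I^n)^{[p^e]}\bigr)^i\subseteq(I^n)^{[p^e]}$, what you correctly ``read off at once'' is $r^{mp^e}\in(I^n)^{[p^e]}$ for every $e$, which is the statement $r^m\in(I^n)^F$. But a ``further Frobenius iteration'' of $r^{mp^e}\in(I^n)^{[p^e]}$ only yields $r^{mp^{e+f}}\in(I^n)^{[p^{e+f}]}$ --- the exponent $m$ never disappears --- so you obtain $r^m\in(I^n)^F$, not $r\in(I^n)^F$. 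These are genuinely different: for instance in $k[x,y]$ with $p=2$, $I=(x^2,y^2)$ and $r=xy$, one has $r^2\in I$ (hence trivially $r^2\in I^F$), yet $r\notin I^F$. Likewise, your intermediate conclusion $cr\in(I^n)^F$ does not give $r\in(I^n)^F$, since the Frobenius power $(I^n)^{[p^e]}$ is not principal and cancelling $c^{p^e}$ is not available. In effect, both of your routes establish $r\in(I^n)^*$ (the tight closure, where a fixed multiplier $c$ is permitted), which is weaker than the Frobenius-closure statement being proved.

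The step you are missing is Artin--Rees, which is how the paper eliminates the multiplier $c$ \emph{before} invoking the pigeonhole. From $x^t\in(I^{(\ell+n)t}:c)$ one writes
\[
cx^t\in c\bigl(I^{(\ell+n)t}:c\bigr)\subseteq (c)\cap I^{(\ell+n)t}\subseteq cI^{(\ell+n)t-m}
\]
for a fixed $m$ independent of $t$ (Artin--Rees applied to the ideal $(c)$ and the $I$-adic filtration), and then the non-zero-divisor hypothesis on $c$ gives $x^t\in I^{(\ell+n)t-m}$ outright. With $t=q=p^e\gg 0$, your pigeonhole argument (run with $\sum a_i=(\ell+n)q-m$ rather than $(\ell+n)q$; the shift $m$ is absorbed once $q>m-\ell$) yields $x^q\in(I^n)^{[q]}$, hence $x\in(I^n)^F$. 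I would keep your combinatorial lemma essentially as written but replace steps 3--4 with this Artin--Rees cancellation.
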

\begin{proof}
Given an arbitrary element $x\in \overline{I^{\ell+n}}$, there exists $c\in R^\circ$ such that $cx^t\in (I^{\ell+n})^t$ for all $t\gg 0$. That is, $x^t\in (I^{(\ell+n)t}:c)$. It follows that
\[cx^t\in c(I^{(\ell+n)t}:c)\subseteq (c)\cap I^{(\ell+n)t}\subseteq cI^{(\ell+n)t-m}\]
where the last inclusion follows from the Artin-Rees Lemma and $m$ is an integer independent of $t$. Since $c$ is a non-zero-divisor by our assumption, we have 
\[x^t\in I^{(\ell+n)t-m},\quad \forall n\gg 0.\]

Writing $I=(g_1,\dots,g_{\ell})$ and setting $t=q=p^e\gg 0$, we have $x^q\in I^{(\ell+n)q-m}$. We claim that $I^{(\ell+n)q-m}\subseteq (I^n)^{[q]}$ and we reason as follows. It suffices to show that each generator $g^{a_1}_1\cdots g^{a_{\ell}}_{\ell}$ ($\sum_i a_i=(\ell+n)q-m$) belongs to $(I^n)^{[q]}$. Write $a_i=s_iq+t_i$ where $s_i\geq 0$ and $0\leq t_i\leq q-1$. If $\sum_i s_i\leq n-1$, then 
\[\sum_i a_i=(\sum_i s_i)q+\sum_i t_i\leq (n-1)q+\ell(q-1)=(\ell+n)q-q-\ell< (\ell+n)q-m,\]
a contradiction. Therefore, $\sum_i s_i\geq  n$ and consequently 
\[I^{(\ell+n)q-m}\subseteq (I^n)^{[q]}.\]
Thus, $x^q\in (I^n)^{[q]}$; that is, $x\in (I^n)^F$. This completes that proof of $\overline{I^{\ell+n}}\subseteq (I^n)^F$ for all $n\geq 1$.
\end{proof}

\begin{corollary}
\label{Skoda for F-pure with generator}
Let $(R,\frakm)$ be an $F$-pure noetherian local ring of prime characteristic $p$ with an infinite residue field. For each ideal $I$, let $\ell(I)$ denotes its analytic spread of $I$. Then
\[\overline{I^{\ell(I)+n}}\subseteq I^n,\quad \forall n\geq 1.\]
\end{corollary}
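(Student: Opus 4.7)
The plan is to reduce immediately to Theorem \ref{prop: integral and Frob closure} by replacing $I$ with a minimal reduction having exactly $\ell(I)$ generators. Since $R$ is $F$-pure it is reduced, so the hypothesis of Theorem \ref{prop: integral and Frob closure} is satisfied, and the ``in particular'' clause gives the integral-closure-inside-ordinary-power statement we want.

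First I would invoke Remark \ref{rmk: basic on reduction and analytic spread}(4): because the residue field of $R$ is infinite, $I$ admits a minimal reduction $J\subseteq I$ that can be generated by exactly $\ell(I)$ elements. Next I would use the standard fact (recalled via Remark \ref{rmk: basic on reduction and analytic spread}(1)--(2)) that if $J$ is a reduction of $I$, then $J$ and $I$ have the same integral closure, and more generally $\overline{J^m}=\overline{I^m}$ for every $m\ge 1$; this follows because $JI^t=I^{t+1}$ for some $t$ forces $J^mI^{mt}=I^{m(t+1)}$, so every element of $I^m$ (and hence of $\overline{I^m}$) is integral over $J^m$. Applying this with $m=\ell(I)+n$ gives
\[
\overline{I^{\ell(I)+n}}\;=\;\overline{J^{\ell(I)+n}}.
\]

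Now I would apply Theorem \ref{prop: integral and Frob closure} to the ideal $J$, which is generated by $\ell=\ell(I)$ elements in the reduced (in fact $F$-pure) ring $R$. Its ``in particular'' conclusion yields $\overline{J^{\ell(I)+n}}\subseteq J^n$. Since $J\subseteq I$ we have $J^n\subseteq I^n$, and stringing the inclusions together gives
\[
\overline{I^{\ell(I)+n}}\;=\;\overline{J^{\ell(I)+n}}\;\subseteq\;J^n\;\subseteq\;I^n,
\]
which is the desired bound for every $n\ge 1$.

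There is no real obstacle here; the corollary is essentially a packaging of Theorem \ref{prop: integral and Frob closure} together with the minimal-reduction machinery summarized in Remark \ref{rmk: basic on reduction and analytic spread}. The only small point to be careful about is the equality $\overline{J^m}=\overline{I^m}$, which should be noted explicitly (or cited from \cite[\S 1.2]{HunekeSwansonBook}) rather than taken for granted, since the statement of the corollary improves Theorem \ref{prop: integral and Frob closure} precisely by passing from the number of generators of $I$ itself to the (possibly smaller) analytic spread $\ell(I)$.
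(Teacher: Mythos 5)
Your proof is correct and follows the same approach as the paper: pass to a minimal reduction $J$ with $\ell(I)$ generators and invoke Theorem \ref{prop: integral and Frob closure}. You simply make explicit the justification hidden in the paper's phrase ``Replacing $I$ by $J$,'' namely that $J^m$ is a reduction of $I^m$ so $\overline{I^m}=\overline{J^m}$, and that $J^n\subseteq I^n$ — a worthwhile unpacking, but not a different argument.
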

\begin{proof}
Since the residue field of $R$ is infinite, $I$ admits a reduction $J$ with $\ell(I)$ generators. Replacing $I$ by $J$, one may assume that $I$ is generated by $\ell(I)$ elements. The corollary follows from Theorem \ref{prop: integral and Frob closure}.
\end{proof}

\begin{corollary}
\label{Skoda for F-pure}
Let $(R,\frakm)$ be an $F$-pure noetherian local ring of prime characteristic $p$. Then 
\[\overline{I^{\dim(R)+n}}\subseteq I^n\] 
for each integer $n\geq 1$.
\end{corollary}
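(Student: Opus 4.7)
The plan is to reduce to Corollary \ref{Skoda for F-pure with generator}, whose additional hypothesis requires the residue field to be infinite. The standard device for this is the faithfully flat local extension $R(X)$ from Remark \ref{rmk: basic on reduction and analytic spread}(5): it has infinite residue field, $\dim R(X)=\dim R$, and $R\to R(X)$ is faithfully flat.

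The key preparatory observation is that $R(X)$ inherits $F$-purity from $R$. This may be verified by realizing $R(X)$ as the localization of the polynomial ring $R[X]$ at $\frakm R[X]$: $F$-purity is preserved under polynomial extensions (the absolute Frobenius on $R[X]$ factors through $R[X^p]$, and $R[X]$ is free over $R[X^p]$) and under localization. This is essentially the only external input the argument requires.

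Granting this, apply Corollary \ref{Skoda for F-pure with generator} to the ideal $IR(X)$ in $R(X)$. Since $\ell(IR(X))\le\dim R(X)=\dim R$, one obtains
\[\overline{(IR(X))^{\dim R+n}}\subseteq (IR(X))^n = I^nR(X).\]

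To descend to $R$, take $x\in\overline{I^{\dim R+n}}$. Any integral equation witnessing $x\in\overline{I^{\dim R+n}}$ over $R$ is simultaneously an integral equation witnessing $x\in\overline{(IR(X))^{\dim R+n}}$ over $R(X)$, so the display above gives $x\in I^nR(X)$. Faithful flatness of $R\to R(X)$ then yields $I^nR(X)\cap R=I^n$, so $x\in I^n$, as desired. The main (mild) obstacle is the ascent of $F$-purity to $R(X)$; the rest is routine descent via faithful flatness.
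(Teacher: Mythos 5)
Your proof is correct and follows essentially the same route as the paper's: pass to the faithfully flat local extension $R(X)$ to get an infinite residue field, check that $F$-purity ascends, apply Corollary \ref{Skoda for F-pure with generator} using $\ell(IR(X))\le\dim R(X)=\dim R$, and descend by faithful flatness via $I^nR(X)\cap R=I^n$. The only (cosmetic) difference is in how the ascent of $F$-purity to $R(X)$ is justified: the paper cites that the closed fiber of $R\to R(X)$ is a field, while you unwind the construction $R(X)=R[X]_{\frakm R[X]}$ and observe that $F$-purity is preserved under polynomial extension (via freeness of $R[X]$ over $R[X^p]$) and under localization.
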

\begin{proof}
If the residue field of $R$ is infinite, then this follows immediately from Corollary \ref{Skoda for F-pure with generator}. Otherwise, consider the flat local extension $R\to R(X)$ as in Remark \ref{rmk: basic on reduction and analytic spread}. Let $I$ be an ideal in $R$. Since the residue field of $R(X)$ is infinite, $IR(X)$ admits a minimal reduction $J$ with $\ell(IR(X))$ elements. Replacing $IR(X)$ by $J$, we may assume that $IR(X)$ is generated by $\ell(IR(X))$ elements. Note that $\ell(IR(X))\leq \dim(R(X))=\dim(R)$. Since the closed fiber of $R\to R(X)$ is a field, $R(X)$ is $F$-pure as well. Hence it follows from Corollary \ref{Skoda for F-pure with generator} that 
\[\overline{(IR(X))^{\ell(I)+n}}\subseteq (IR(X))^n.\]
In particular
 \[\overline{(IR(X))^{\dim(R)+n}}=\overline{(IR(X))^{\dim(R(X))+n}}\subseteq (IR(X))^n,\quad \forall n\geq 1.\]
Consequently,
\[\overline{I^{\dim(R)+n}}\subseteq \overline{(IR(X))^{\dim(R)+n}}\bigcap R\subseteq (IR(X))^n\bigcap R=I^n,\quad \forall n\geq 1\]
where the last equality follows from the fact $R(X)$ is a faithfully flat extension of $R$.
 \end{proof}

\begin{remark}
\label{bound optimal for F-pure}
For $F$-pure rings, $\dim(R)$ is the optimal value for $k$ in Theorem \ref{thm: huneke}. 

Consider $R=\overline{\FF_p}[[x,y,x]]/(x^3+y^3+z^3)$ with $p\equiv 1$ (mod 3). Then $R$ is Cohen-Macaulay and $F$-pure; hence the hypotheses in Corollary \ref{Skoda for F-pure} are satisfied. Set $I=(x,y)$. Then $\overline{I^3}\subseteq I$ according to Corollary \ref{Skoda for F-pure}, but $\overline{I^2}\not\subseteq I$ ( $z^2\in \overline{I^2}$ since $(z^2)^3\in (I^2)^3$, but $z^2\notin I$). 
\end{remark}

To extend Theorem \ref{Skoda for F-pure} beyond $F$-pure rings, we will consider rings in which every parameter ideal is Frobenius closed. One approach is to ``approximate'' a given ideal with a parameter ideal, using \cite[3.1,~3.2]{Aberbach-Huneke}.

\begin{remark}
\label{approximate ideal use AH}
Let $(R,\frakm)$ be a noetherian local ring with an infinite residue field and $I$ be an ideal. Let $J$ be a minimal reduction with $\ell=\ell(I)$ generators ($\ell(I)$ denotes the analytic spread of $I$). Then \cite[3.1]{Aberbach-Huneke} shows that there exists a generating set $\{a'_1,\dots,a'_{\ell}\}$ for $J$ such that
\begin{enumerate}
\item If $P$ is a prime ideal containing $I$ and $\height(P)=i\leq \ell(I)$, then $(a'_1,\dots,a'_i)_P$ is a reduction of $IR_P$, and
\item $\height(((a'_1,\dots,a'_i)I^n:I^{n+1})+I)\geq i+1$ for all $n\gg 0$.
\item If $c_i\equiv a'_i$ modulo $I^2$, then $(1)$ and $(2)$ hold with $c_i$ replacing $a'_i$.
\end{enumerate}
Fix such a generating set $\{a'_1,\dots,a'_{\ell}\}$ for $J$ and let $h$ denote the height of $I$. \cite[3.2]{Aberbach-Huneke} shows that for nonnegative integers $N$ and $w$  there are elements $a_1,\dots,a_{\ell}$ and $t_{h+1},\dots,t_\ell$ (we set $t_i=0$ for $i\leq h$) such that the following hold:
\begin{enumerate}
\item $a_i\equiv a'_i$ modulo $I^2$;
\item $t_i\in \frakm^N$ for $h+1\leq i\leq \ell$;
\item $b_1,\dots,b_{\ell}$ are parameters, where $b_i=a_1+t_i$;
\item there is an integer $M$ such that $t_{i+1}I^{M+s}\subseteq J^s_iI^M$ for all $0\leq s\leq w+\ell$, where $J_i=(a_1,\dots,a_i)$.
\end{enumerate}
\end{remark}

\begin{theorem}
\label{thm: inclusion for Frobenius closure}
Let $(R,\frakm)$ be an equidimensional and catenary reduced local ring of characteristic $p$ having infinite residue field. Let $I$ be an ideal of analytic spread $\ell$ and height $h$. Let $J$ be a minimal reduction of $I$. Fix integers $w\geq \ell-h+1,N\geq 0$. Choose $a_i$, $t_i$, $J_i$, and $M$ as in Remark \ref{approximate ideal use AH}. Set $B=(b_1,\dots,b_{\ell})$. Assume that each element in $R^{\circ}$ is a nonzerodivisor. Then
\[\overline{I^{\ell+w}}\subseteq (B^{w-(\ell-h)})^F.\]
\end{theorem}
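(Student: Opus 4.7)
The plan is to adapt the proof of Theorem~\ref{prop: integral and Frob closure} to the current setting. Where that argument exploited the fact that $I$ itself was generated by $\ell$ elements, here I would use the minimal reduction $J=(a_1,\dots,a_\ell)$, rewrite monomials in the $a_i$ using $a_i=b_i-t_i$, and absorb the resulting $t$-factors into powers of $B$ via property~(4) of Remark~\ref{approximate ideal use AH}.

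First I would mirror the opening of Theorem~\ref{prop: integral and Frob closure}: since $J$ is a reduction of $I$ we have $\overline{I^{\ell+w}}=\overline{J^{\ell+w}}$, so for $x$ in this ideal there is some $c\in R^\circ$ with $cx^t\in J^{(\ell+w)t}$ for all $t\gg 0$. The Artin--Rees/non-zero-divisor step then yields $x^q\in J^{(\ell+w)q-m_1}$ for $q=p^e\gg 0$, where $m_1$ is a constant independent of $q$. Each generator of this ideal is a monomial $\prod a_i^{e_i}$ with $\sum e_i=(\ell+w)q-m_1$; substituting $a_i=b_i-t_i$ (with $t_i=0$ for $i\le h$) and expanding binomially writes such a monomial as a sum of terms of the form $\prod_{i=1}^\ell b_i^{f_i}\cdot\prod_{i>h}t_i^{g_i}$.

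The heart of the argument is absorbing the $t_i$-factors. A straightforward induction on $g_j$ upgrades property~(4) to
\[
t_j^{g_j}\cdot I^{M+g_j(w+\ell)}\;\subseteq\;J_{j-1}^{\,g_j(w+\ell)}\cdot I^M,\qquad h+1\le j\le \ell.
\]
Applied iteratively from $j=\ell$ down to $j=h+1$, each occurrence of $t_j^{g_j}$ is exchanged for an element of $J_{j-1}^{g_j(w+\ell)}$ at the cost of some $I$-degree. Since $J_{j-1}\subseteq B+(t_{h+1},\dots,t_{j-1})$, each resulting $J_{j-1}$-factor either contributes directly to the $B$-exponent or reintroduces a lower-indexed $t_i$ that is absorbed in the next layer of the iteration. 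The hypothesis $w\ge\ell-h+1$ is precisely the slack needed for the initial $I$-degree budget $(\ell+w)q-m_1$ to cover the cumulative absorption cost while leaving an $I^M$-tail and a $B$-exponent large enough to survive the final pigeonhole step.

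Once all $t$-factors have been absorbed, $x^q$ is expressed as a sum of terms $\prod b_i^{F_i}\cdot y$ with $y$ in an $I^M$-tail. Splitting $F_i=S_iq+U_i$ with $0\le U_i<q$ and applying the pigeonhole argument from Theorem~\ref{prop: integral and Frob closure} gives $\sum S_i\ge w-(\ell-h)$, whence $\prod b_i^{S_iq}\in(B^{w-(\ell-h)})^{[q]}$ and the whole term lies in $(B^{w-(\ell-h)})^{[q]}$. This yields $x^q\in(B^{w-(\ell-h)})^{[q]}$, i.e.\ $x\in(B^{w-(\ell-h)})^F$, as required. The main obstacle I expect is the combinatorial bookkeeping in the iterated absorption: each application of property~(4) can reintroduce lower-index $t_i$'s through the inclusion $J_{j-1}\subseteq B+(t_{h+1},\dots,t_{j-1})$, and one must verify that this recursive process terminates with enough $I$-degree remaining for the pigeonhole to force a $B$-exponent of at least $w-(\ell-h)$; the bound $w\ge\ell-h+1$ should be tight for this to close.
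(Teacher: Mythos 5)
Your proposal captures the right intuition — substitute $a_i=b_i-t_i$, absorb the $t$-factors through property~(4) of Remark~\ref{approximate ideal use AH}, and finish with the pigeonhole argument from Theorem~\ref{prop: integral and Frob closure} — but it has a genuine gap in the absorption mechanism that the paper circumvents by a different arrangement of the argument.

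Property~(4) is a statement about ideals: $t_j I^{M+s}\subseteq J_{j-1}^s I^M$. To trade a factor of $t_j^{g_j}$ for a $J_{j-1}$-power you must therefore \emph{have available} a factor lying in $I^{M+g_j(w+\ell)}$. But after your Artin--Rees step you have $x^q\in J^{(\ell+w)q-m_1}$, i.e.\ a sum of monomials $\prod a_i^{e_i}$; there is no free $I$-power multiplier sitting alongside those monomials for the absorption to act on, and $J$-degree on the $a_i$'s does not convert into the needed $I^{M+\cdots}$ budget. You also correctly flag the second difficulty: $J_{j-1}\subseteq B+(t_{h+1},\dots,t_{j-1})$ re-injects lower-index $t_i$'s, so the iteration is genuinely recursive and you have not shown it terminates within the degree budget $(\ell+w)q-m_1$. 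As written, the claim that the pigeonhole forces $\sum S_i\ge w-(\ell-h)$ is an unverified assertion rather than a proof.

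The paper resolves both issues at once by introducing a multiplier $c\in I^M\cap R^\circ$ (with a separate construction when $h=0$ so that $c$ is still a nonzerodivisor) and multiplying through by $c^{\ell-h}$. This supplies exactly the $I$-degree needed: from $c t_j^n I^{ns}\subseteq J_{j-1}^{ns}$ one proves, by a clean \emph{double} induction on $(i,r)$, the ideal inclusion $c^{i-h}J_i^{(i+r)q}\subseteq B_i^{(h+r)q}$, peeling off one bracket $J_i^{[q]}$ at a time and spending one power of $c$ per index $i>h$. The recursion you worried about is replaced by the inductive hypothesis on smaller $i$ and $r$, so there is no combinatorial explosion. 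Artin--Rees and the nonzerodivisor hypothesis are then applied \emph{once, at the end}, to strip off the multiplier $c^{\ell-h}d$ and land in $(B^{w-\ell+h})^{[q]}$. So your plan would need to be restructured around such a multiplier and a two-parameter induction before it closes; as it stands, the absorption step does not go through.
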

\begin{proof}
If $I=(0)$, then there is nothing to prove. Assume that $I\neq (0)$.

By the choice of $t_j$, we have $t_jI^{M+s}\subseteq J^s_{j-1}I^M$ and consequently 
\[t^n_jI^{M+ns}\subseteq J^{ns}_{j-1}I^M\]
by an induction on $n$. 

If $h>0$, then $I^M\cap R^{\circ}\neq \emptyset $. In this case, pick an element $c\in I^M\cap R^{\circ}$. If $h=0$, then $I$ is not contained in all the minimal primes. By Prime Avoidance, one can pick $c_1\in I^M$ that is not contained in any minimal prime that does not contain $I$ and $c_2\notin I$ to be an element in the intersection of the minimal primes not containing $I$. Note that $c_2I=0$. In this case, set $c:=c_1+c_2$. Then $c\in R^{\circ}$.

By the choice of $c$, one has
\begin{equation}
\label{equ: I J inclusion}
ct^{n}_jI^{ns}\subseteq J^{ns}_{j-1}.
\end{equation}
Set $B_i=(b_1,\dots,b_i)$ for $h\leq i\leq \ell$. We wish to show by induction on $i$ that, for each $0\leq r\leq w$ and each $q=p^e$, 
\begin{equation}
\label{equ: JB inclusion}
c^{i-h}J^{(i+r)q}_i\subseteq B^{(h+r)q}_i.
\end{equation}
The base case is when $i=h$ and $0\leq r\leq w$ is arbitrary. Since $J_h=B_h$, the base case is true.

Assume now that we are given $r$ and $i>h$ and that (\ref{equ: JB inclusion}) is true either for $i'<i$ (with $r'\leq w$ arbitrary) or for $i'=i$ (with $r'<r\leq w$). By our choice of $c$ and of $t_j$,
\begin{align*}
&\quad c^{i-h}J^{(i+r)q}_i\\
&= c^{i-h}J^{[q]}_iJ^{(i+r-1)q}_i \ {\rm (since\ } J_i\ {\rm can\ be\ generated\ by\ } i\ {\rm elements)}\\
&=c^{i-h}\Big[ J^{[q]}_hJ^{(i+r-1)q}_i + a^q_{h+1}J^{(i+r-1)q}_i+\cdots +a^q_iJ^{(i+r-1)q}_i\Big]\\
&=c^{i-h}\Big[ J^{[q]}_hJ^{(i+r-1)q}_i + (b^q_{h+1}-t^q_{h+1})J^{(i+r-1)q}_i+\cdots +(b^q_{i}-t^q_{i})J^{(i+r-1)q}_i\Big]\\
&=c^{i-h-1}\Big[ cJ^{[q]}_hJ^{(i+r-1)q}_i + c(b^q_{h+1}-t^q_{h+1})J^{(i+r-1)q}_i+\cdots +c(b^q_{i}-t^q_{i})J^{(i+r-1)q}_i\Big]\\
&\subseteq c^{i-h-1}\Big[ cJ^{[q]}_hJ^{(i+r-1)q}_i + (cb^q_{h+1}J^{(i+r-1)q}_i+ct^q_{h+1}I^{(i+r-1)q})+\cdots +(cb^q_{i}J^{(i+r-1)q}_i+ct^q_{i}I^{(i+r-1)q})\Big]\\
&\stackrel{(\ref{equ: I J inclusion})}{\subseteq} c^{i-h-1}\Big[ cJ^{[q]}_hJ^{(i+r-1)q}_i + (cb^q_{h+1}J^{(i+r-1)q}_i+J^{(i+r-1)q}_{h})+\cdots +(cb^q_{i}J^{(i+r-1)q}_i+J^{(i+r-1)q}_{i-1})\Big]\\
&\stackrel{(\ref{equ: JB inclusion})}{\subseteq} B^{(h+r)q}_i\ {\rm since\ by\ our\ induction\ hypothesis\ each\ term\ is\ in\ }B^{(h+r)q}_i.
\end{align*}
Therefore, 
\[c^{\ell-h}J^{(\ell+r)q}=c^{\ell-h}J^{(\ell+r)q}_{\ell}\subseteq B^{(h+r)q}_\ell=B^{(h+r)q}.\]

Now given each $z\in \overline{I^{\ell+w}}=\overline{J^{\ell+w}}$, there exists a $d\in R^{\circ}$ such that $dz^q\in J^{(\ell+w)q}$. Hence  
\[c^{\ell-h}dz^q\in c^{\ell-h}J^{(\ell+w)q}\subseteq B^{(h+w)q}\]
where the last inclusion holds since $w\geq \ell-h+1$. Then $z^q\in (B^{(h+w)q}:(c^{\ell-h}d))$. (At this point, we follow the same strategy as in the proof of Proposition \ref{prop: integral and Frob closure}.) It follows that
\[c^{\ell-h}dz^q\in (c^{\ell-h}d)(B^{(h+w)q}:(c^{\ell-h}d))\subseteq (c^{\ell-h}d)\cap B^{(h+w)q}\subseteq (c^{\ell-h}d)B^{(h+w)q-m}\]
for a fixed $m$ (independent of $q$) where the last inclusion holds due to the Artin-Rees Lemma. Since $w\geq \ell-h+1$, we have $B^{(h+w)q-m}\subseteq (B^{w-\ell+h})^{[q]}$ for all $q\gg 0$ since $B$ is generated by $\ell$ elements. Since $c^{\ell-h}d$ is a nonzerodivisor, it follows that $z^q\in (B^{w-\ell+h})^{[q]}$; consequently $z\in (B^{w-\ell+h})^F$.
\end{proof}


\begin{theorem}
\label{uniform Skoda for I}
Let $(R,\frakm)$ be an equidimensional and catenary local ring of characteristic $p$ with an infinite residue field. Let $I$ be an ideal of analytic spread $\ell(I)$ and height $\height(I)$. Assume that each parameter ideal in $R$ is Frobenius closed. Then
\begin{equation}
\label{Skoda with analytic spread}
\overline{I^{2\ell(I)-\height(I)+1}}\subseteq I.
\end{equation}
In particular, $\overline{I^{\dim(R)+1}}\subseteq I$ for each ideal $I$.
\end{theorem}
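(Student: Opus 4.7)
The plan is to apply Theorem \ref{thm: inclusion for Frobenius closure} at the minimal admissible choice $w = \ell(I)-\height(I)+1$ and then cancel the Frobenius closure using the standing hypothesis on parameter ideals; the discrepancy between the approximating parameter ideal $B$ and $I$ itself is then absorbed by an $\frakm$-adic Krull intersection.

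Set $\ell := \ell(I)$ and $h := \height(I)$, and fix a minimal reduction $J = (a'_1,\dots,a'_\ell)$ of $I$ with a generating set chosen as in Remark \ref{approximate ideal use AH}. For any integer $N \geq 1$, apply Remark \ref{approximate ideal use AH} with $w := \ell - h + 1$ to produce elements $a_1,\dots,a_\ell, t_1,\dots,t_\ell$ and parameters $b_i = a_i + t_i$ such that $a_i \equiv a'_i \pmod{I^2}$, $t_i \in \frakm^N$, and $B := (b_1,\dots,b_\ell)$ is a parameter ideal. Theorem \ref{thm: inclusion for Frobenius closure} then yields
$$\overline{I^{2\ell - h + 1}} \;=\; \overline{I^{\ell + w}} \;\subseteq\; \bigl(B^{\,w-(\ell - h)}\bigr)^F \;=\; B^F,$$
and since $B$ is a parameter ideal, the hypothesis gives $B^F = B$.

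Next I would observe that $a'_i \in J \subseteq I$ forces $a_i \in I$, so $B = (a_1 + t_1,\dots,a_\ell + t_\ell) \subseteq I + \frakm^N$. Because $N$ was arbitrary,
$$\overline{I^{2\ell - h + 1}} \;\subseteq\; \bigcap_{N\geq 1}\bigl(I + \frakm^N\bigr) \;=\; I$$
by Krull's intersection theorem applied to $R/I$, which establishes the main inclusion.

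For the \emph{in particular} clause, I would apply the inequality just proved to the $\frakm$-primary ideals $I + \frakm^N$, which satisfy $\ell(\cdot) = \height(\cdot) = \dim R$, so that $2\ell - h + 1 = \dim R + 1$. Then
$$\overline{I^{\dim R + 1}} \;\subseteq\; \overline{(I + \frakm^N)^{\dim R + 1}} \;\subseteq\; I + \frakm^N$$
for every $N$, and intersecting over $N$ yields the claim in the spirit of Remark \ref{rem: reduce to primary}. The main obstacle I anticipate is not conceptual but bookkeeping: one must choose $w$ so that exactly $w - (\ell - h) = 1$, ensuring only a single power of the parameter ideal $B$ appears on the right-hand side of Theorem \ref{thm: inclusion for Frobenius closure}; this is precisely what makes the Frobenius-closed hypothesis applicable and allows the Krull intersection argument to complete the proof.
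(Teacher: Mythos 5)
Your argument is essentially the paper's proof: apply Theorem \ref{thm: inclusion for Frobenius closure} with $w=\ell(I)-\height(I)+1$ and arbitrary $N$, use the Frobenius-closed hypothesis to kill the closure on $B$, and then intersect over $N$ via Krull; the reduction to the $\frakm$-primary case for the \emph{in particular} clause is the same idea as Remark \ref{rem: reduce to primary}. The one thing you pass over silently is the verification that the hypotheses of Theorem \ref{thm: inclusion for Frobenius closure} (that $R$ is reduced, so every element of $R^{\circ}$ is a nonzerodivisor) actually hold here; the paper notes that this follows because ``every parameter ideal Frobenius closed'' forces $R$ to be reduced, and you should record that check.
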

\begin{proof}
Since each parameter ideal in $R$ is Frobenius closed, $R$ is reduced; hence each element in $R^{\circ}$ is a nonzerodivisor; the hypothesis in Theorem \ref{thm: inclusion for Frobenius closure} are satisfied. Note that the ideal $B$ as in Theorem \ref{thm: inclusion for Frobenius closure} is a parameter ideal and hence Frobenius closed by the assumption. It follows from Theorem \ref{thm: inclusion for Frobenius closure} (with $w=\ell(I)-\height(I)+1$) that
\[\overline{I^{2\ell(I)-\height(I)+1}}\subseteq B.\]
By the construction of $B$, we have $B\subseteq J+\frakm^N$. Therefore
\[\overline{I^{2\ell(I)-\height(I)+1}}\subseteq J+\frakm^N\quad \forall N\]
It follows from the Krull Intersection Theorem that 
\[\overline{I^{2\ell(I)-\height(I)+1}}\subseteq J\subseteq I.\]

It remains to show that $\overline{I^{\dim(R)+1}}\subseteq I$ for each ideal $I$. According to Remark \ref{rem: reduce to primary}, we may assume that $I$ is primary to the maximal ideal. Then $\height(I)=\dim(R)$. Since $\ell(I)\leq \dim(R)$. The desired inclusion follows.
\end{proof}

\begin{remark}
\label{rmk: optimal for F-closed}
\begin{enumerate}
\item There are examples of non-$F$-pure rings in which every parameter ideals is Frobenius closed ({\it cf.} \cite[\S6]{QuyShimomoto}).
 
\item The exponent $2\ell(I)-\height(I)+1$ in (\ref{Skoda with analytic spread}) is optimal. Consider $R=\overline{\FF_p}[[x,y,x]]/(x^3+y^3+z^3)$ with $p\equiv 1$ (mod 3) and $I=(x,y)$. Then $\ell=\ell(I)=2$ and $\height(I)=2$; hence $2\ell(I)-\height(I)+1$=3. According to (\ref{Skoda with analytic spread}), $\overline{I^3}\subseteq I$, but (as we have seen in Remark \ref{bound optimal for F-pure}) $\overline{I^2}\not\subseteq I$.
\end{enumerate}
\end{remark}

\begin{question}
\label{ques: F-closed}
Let $(R,\frakm)$ be a noetherian local ring of prime characteristic $p$. 
\begin{enumerate}
\item\label{ques: power of parameter} Assume each parameter ideal in $R$ is Frobenius closed. Is it true that every power of a parameter ideal is also Frobenius closed?
\item\label{ques: flat local ext of F-closed} Let $R\to R(X)$ be a flat local extension as in Remark \ref{rmk: basic on reduction and analytic spread}. Assume that each parameter ideal in $R$ is Frobenius closed. Does the same hold in $R(X)$?
\end{enumerate}
\end{question}

If Question \ref{ques: F-closed}(\ref{ques: power of parameter}) has an affirmative answer, then (\ref{Skoda with analytic spread}) can be improved to 
\[\overline{I^{2\ell(I)-\height(I)+n}}\subseteq I^n\] 
for each $n\geq 1$. If Question \ref{ques: F-closed}(\ref{ques: flat local ext of F-closed}) has an affirmative answer, then one can show that $\overline{I^{\dim(R)+1}}\subseteq I$ for each ideal $I$ if each parameter ideal in $R$ is Frobenius closed, without assuming the residue field is infinite.

We have a positive answer to Question \ref{ques: F-closed}(\ref{ques: power of parameter}) when the ideal $I$ is generated by a regular sequence.
\begin{proposition}
\label{power of regular sequence F-closed}
Let $(R,\frakm)$ be a noetherian local ring of prime characteristic $p$ and let $g_1,\dots,g_t$ be a regular sequence. Set $I=(g_1,\dots,g_t)$. If $I^F=I$, then $(I^n)^F=I^n$ for each integer $n\geq 1$. 
\end{proposition}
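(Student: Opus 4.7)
The plan is to induct on $n$. The base case $n=1$ is the standing hypothesis that $I^F=I$. For $n\ge 2$, assume that $(I^m)^F=I^m$ for every $1\le m<n$. Given $x\in (I^n)^F$, note that $(I^n)^F\subseteq (I^{n-1})^F=I^{n-1}$ by the inductive hypothesis, so $x\in I^{n-1}$. Write
\[
x=\sum_{|a|=n-1} c_a\, g^a,\qquad g^a:=g_1^{a_1}\cdots g_t^{a_t},
\]
with $c_a\in R$. The goal is to show that each $c_a\in I$, which will immediately yield $x\in I\cdot I^{n-1}=I^n$.

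The main leverage comes from two standard facts about regular sequences. First, since $g_1,\dots,g_t$ is a regular sequence, so is $g_1^q,\dots,g_t^q$ for any $q=p^e$, and consequently
\[
(I^n)^{[q]}=(I^{[q]})^n \qquad\text{and}\qquad (I^{[q]})^{n-1}/(I^{[q]})^n\ \text{is a free }R/I^{[q]}\text{-module}
\]
with basis the monomials $\{g^{qa}:|a|=n-1\}$, via the isomorphism $\operatorname{gr}_{I^{[q]}}(R)\cong (R/I^{[q]})[X_1,\dots,X_t]$. Second, because we are in characteristic $p$, the Frobenius is additive, so
\[
x^q=\Big(\sum_{|a|=n-1} c_a g^a\Big)^q=\sum_{|a|=n-1} c_a^q\, g^{qa}.
\]
By hypothesis there is some $q$ with $x^q\in (I^n)^{[q]}=(I^{[q]})^n$, so $\sum_{|a|=n-1} c_a^q g^{qa}\in (I^{[q]})^n$. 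Reading this in the free quotient $(I^{[q]})^{n-1}/(I^{[q]})^n$ forces $c_a^q\in I^{[q]}$ for every $a$ with $|a|=n-1$. But $c_a^q\in I^{[q]}$ means exactly that $c_a\in I^F$, which equals $I$ by hypothesis. Hence $x=\sum c_a g^a\in I^n$, closing the induction.

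The step I expect to need the most care is the identification of $(I^{[q]})^{n-1}/(I^{[q]})^n$ as a free $R/I^{[q]}$-module on the monomials $g^{qa}$, since this is what lets us "read off" that each coefficient $c_a^q$ lies in $I^{[q]}$. The rest (additivity of Frobenius, $(I^n)^{[q]}=(I^{[q]})^n$, and preservation of regularity under taking $q$-th powers of a regular sequence) is standard; the inductive skeleton and the conversion $c_a^q\in I^{[q]}\Rightarrow c_a\in I^F=I$ then give the result cleanly.
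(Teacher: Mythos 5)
Your proof is correct and takes essentially the same route as the paper: induct on $n$, express $x\in(I^n)^F$ as an $R$-combination of degree-$(n-1)$ monomials in the $g_i$, apply Frobenius, and use that $g_1^q,\dots,g_t^q$ is a regular sequence to conclude each $c_a^q\in I^{[q]}$, hence $c_a\in I^F=I$. The only cosmetic difference is that the paper phrases the key step as a homogeneous-polynomial substitution fact for regular sequences rather than via the freeness of $(I^{[q]})^{n-1}/(I^{[q]})^n$ over $R/I^{[q]}$, but these are the same statement.
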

\begin{proof}
We will proceed by an induction on $n$. When $n=1$, there is nothing to prove.

Assume that we have shown $(I^n)^F=I^n$. Let $r$ be an element in $(I^{n+1})^F$. Since $(I^{n+1})^F\subseteq (I^{n})^F=I^n$, it follows that $r\in I^n$. Write $r=\sum c_{\underline{a}}g^{a_1}_1\cdots g^{a_t}_t$ with $\sum_i a_i=n$. Since $r\in (I^{n+1})^F$, there exists an integer $q$ which is a power of $p$ such that
\[r^q=\sum c^q_{\underline{a}}g^{qa_1}_1\cdots g^{qa_t}_t\in (I^{n+1})^{[q]}=(I^{[q]})^{n+1}.\]
Set $h_i=g^q_i$ for $i=1,\dots,t$ and $Q=(h_1,\dots,h_t)=I^{[q]}$. Then $h_1,\dots,h_t$ form a regular sequence by our assumptions. Consider the polynomial 
\[f=\sum c^q_{\underline{a}}Y^{a_1}_1\cdots Y^{a_t}_t\in R[Y_1,\dots,Y_t],\]
which is a homogeneous polynomial of degree $n$. Since $h_1,\dots,h_t$ form a regular sequence and $f(h_1,\dots,h_t)\in Q^{n+1}$, it follows that all coefficients $c^q_{\underline{a}}$ must be in $Q=I^{[q]}$. Hence $c_{\underline{a}}\in I^F=I$; consequently $r\in I^{n+1}$. This completes the proof that $(I^n)^F=I^n$ for all $n\geq 1$.
\end{proof}

The following theorem is an immediate consequence of Theorem \ref{uniform Skoda for I} and Proposition \ref{power of regular sequence F-closed}.
\begin{theorem}
\label{uniform BS for CM with infinite residue field}
Let $R$ be a noetherian Cohen-Macaulay local ring of prime characteristic $p$ with an infinite residue field. Assume that each parameter ideal is Frobenius closed. Then
\[\overline{I^{2\ell(I)-\height(I)+n}}\subseteq I^n\]
for each ideal $I$ with analytic spread $\ell(I)$ and height $\height(I)$ and every integer $n\geq 1$. 

In particular, $\overline{I^{\dim(R)+n}}\subseteq I^n$ for each ideal $I$ and every integer $n\geq 1$.
\end{theorem}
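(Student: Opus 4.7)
The plan is to run the same argument as in Theorem \ref{uniform Skoda for I}, but exploit the Cohen-Macaulay hypothesis to upgrade the Frobenius closure of a parameter ideal to the Frobenius closure of an \emph{arbitrary power} of a parameter ideal, via Proposition \ref{power of regular sequence F-closed}.

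Concretely, fix an ideal $I$ with analytic spread $\ell=\ell(I)$ and height $h=\height(I)$, and fix $n\geq 1$. Let $J$ be a minimal reduction of $I$ (which exists since the residue field is infinite). I would set $w:=\ell-h+n\geq \ell-h+1$ and, for an arbitrary positive integer $N$, invoke the Aberbach--Huneke construction of Remark \ref{approximate ideal use AH} to produce the parameter ideal $B=(b_1,\dots,b_\ell)$ with $b_i=a_i+t_i$, $a_i\equiv a_i'\pmod{I^2}$ and $t_i\in\frakm^N$. Since each parameter ideal in $R$ is Frobenius closed, $R$ is reduced, so elements of $R^\circ$ are non-zero-divisors and the hypotheses of Theorem \ref{thm: inclusion for Frobenius closure} hold. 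That theorem yields
\[
\overline{I^{2\ell-h+n}}=\overline{I^{\ell+w}}\subseteq (B^{w-(\ell-h)})^F=(B^n)^F.
\]

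Next, I would observe that $B$ is a parameter ideal of the Cohen-Macaulay ring $R$, so $b_1,\dots,b_\ell$ form a regular sequence. By hypothesis $B^F=B$, and Proposition \ref{power of regular sequence F-closed} then gives $(B^n)^F=B^n$. Combining with the previous inclusion,
\[
\overline{I^{2\ell-h+n}}\subseteq B^n.
\]
Since $b_i\in J+I^2+\frakm^N\subseteq I+\frakm^N$, expanding $B^n$ shows
\[
B^n\subseteq (I+\frakm^N)^n\subseteq I^n+\frakm^N.
\]
Because $N$ was arbitrary (the choice of $B$ depends on $N$, but the left-hand side does not), the Krull intersection theorem gives
\[
\overline{I^{2\ell-h+n}}\subseteq \bigcap_{N\geq 1}(I^n+\frakm^N)=I^n,
\]
which is the desired bound.

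For the final assertion, I would reduce to the case where $I$ is $\frakm$-primary using Remark \ref{rem: reduce to primary}. Then $\height(I)=\dim(R)$ and $\ell(I)\leq \dim(R)$, so $2\ell(I)-\height(I)+n\leq \dim(R)+n$, and the inclusion just proved implies $\overline{I^{\dim(R)+n}}\subseteq I^n$. The only step that requires genuine input beyond bookkeeping is the passage $(B^n)^F=B^n$, which is exactly what Proposition \ref{power of regular sequence F-closed} delivers under the Cohen-Macaulay hypothesis; everything else is a repackaging of Theorems \ref{thm: inclusion for Frobenius closure} and \ref{uniform Skoda for I}.
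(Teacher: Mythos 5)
Your proposal is correct and follows exactly the route the paper intends: it re-runs Theorem~\ref{thm: inclusion for Frobenius closure} with $w=\ell(I)-\height(I)+n$ to land in $(B^n)^F$, then uses the Cohen--Macaulay hypothesis together with Proposition~\ref{power of regular sequence F-closed} to identify $(B^n)^F=B^n$, and closes with the Krull intersection argument and the reduction to the $\frakm$-primary case, which is precisely what the paper dismisses as ``an immediate consequence of Theorem~\ref{uniform Skoda for I} and Proposition~\ref{power of regular sequence F-closed}.''
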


Let $(R,\frakm)$ be a noetherian local ring of prime characteristic. $R$ is called {\it $F$-injective} if the Frobenius action on $H^j_\frakm(R)$ is injective for each $j$. It is well-known that
\begin{enumerate}
\item an $F$-injective ring is reduced, and that 
\item in a Cohen-Macaulay local ring, $F$-injectivity is equivalent to the condition that every ideal generated by a system of parameters being Frobenius closed.
\end{enumerate}

\begin{corollary}
\label{uniform BS for CM}
Let $(R,\frakm)$ be a Cohen-Macaulay $F$-injective local ring of prime characteristic $p$. Then 
\[\overline{I^{\dim(R)+n}}\subseteq I^n\] 
for each ideal $I$ and every integer $n\geq 1$.
\end{corollary}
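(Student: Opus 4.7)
The plan is to mimic the reduction used in the proof of Corollary \ref{Skoda for F-pure}: pass from $R$ to the faithfully flat extension $R(X)$ of Remark \ref{rmk: basic on reduction and analytic spread} in order to secure an infinite residue field, verify that $R(X)$ inherits the hypotheses of Theorem \ref{uniform BS for CM with infinite residue field}, and then descend the resulting inclusion back to $R$ by faithful flatness.

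The routine hypotheses on $R(X)$ are already recorded in Remark \ref{rmk: basic on reduction and analytic spread}: $R(X)$ is local with maximal ideal $\frakm R(X)$, has infinite residue field, has the same dimension as $R$, and is Cohen-Macaulay. The one substantive point to verify is that $R(X)$ is $F$-injective. Since $R(X)$ is Cohen-Macaulay, by the criterion recalled immediately before the statement of the corollary this is equivalent to showing that every parameter ideal of $R(X)$ is Frobenius closed, and using the direct limit description
\[H^d_{\frakm R(X)}(R(X)) \;=\; \varinjlim_t\, R(X)/(x_1^t,\dots,x_d^t)R(X)\]
for a fixed system of parameters $x_1,\dots,x_d$ of $R$, this in turn reduces to showing that each $J_t R(X) := (x_1^t,\dots,x_d^t)R(X)$ is Frobenius closed in $R(X)$.

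The ascent of $F$-injectivity along $R \to R(X)$ is the step I expect to be the main obstacle, and I would handle it in two steps through the intermediate polynomial ring $R[X]$. Because $R$ is Cohen-Macaulay and $F$-injective, each $J_t = (x_1^t,\dots,x_d^t)$ is already Frobenius closed in $R$. A direct computation then shows that $J_t R[X]$ is Frobenius closed in $R[X]$: for $f = \sum_i a_i X^i \in R[X]$ with $f^p = \sum_i a_i^p X^{pi} \in J_t^{[p]} R[X]$, comparing coefficients of the powers of $X$ forces each $a_i^p \in J_t^{[p]}$, hence $a_i \in J_t$ and $f \in J_t R[X]$. A routine clearing-of-denominators argument then shows that Frobenius closedness is preserved under the localization $R[X] \to R(X) = R[X]_{\frakm R[X]}$, yielding the Frobenius closedness of $J_t R(X)$ in $R(X)$ and thus the $F$-injectivity of $R(X)$.

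With $R(X)$ now verified to satisfy the hypotheses of Theorem \ref{uniform BS for CM with infinite residue field}, that theorem applied to $R(X)$ yields $\overline{(IR(X))^{\dim(R)+n}} \subseteq (IR(X))^n$ for every ideal $I \subseteq R$ and every $n \geq 1$. Finally, any integral equation over $I^{\dim(R)+n}$ satisfied by $x \in R$ remains valid in $R(X)$, so
\[\overline{I^{\dim(R)+n}} \;\subseteq\; \overline{(IR(X))^{\dim(R)+n}} \cap R \;\subseteq\; (IR(X))^n \cap R \;=\; I^n,\]
where the last equality uses the faithful flatness of $R \to R(X)$.
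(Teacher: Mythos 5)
Your proposal is correct and follows essentially the same route as the paper's proof: both pass to $R(X)$ to get an infinite residue field, invoke Theorem \ref{uniform BS for CM with infinite residue field} there, and descend by faithful flatness. The only difference is that you work out the ascent of $F$-injectivity to $R(X)$ from scratch (via Frobenius closedness of the extended parameter ideals $J_tR[X]$ and then localization), whereas the paper simply cites the standard fact that $F$-injectivity passes along a faithfully flat local map with field closed fiber.
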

\begin{proof}
Let $R\to R(X)$ be the flat local extension as in Remark \ref{rmk: basic on reduction and analytic spread}. Then $R(X)$ is also Cohen-Macaulay. Since $R\to R(X)$ is faithfully flat and its closed fiber is a field, $R(X)$ is $F$-injective as well. It follows that each parameter ideal in $R(X)$ is Frobenius closed; consequently $\overline{(IR(X))^{\dim(R)+n}}\subseteq (IR(X))^n$ for each integer $n\geq 1$ by Theorem \ref{uniform BS for CM with infinite residue field}. Since $R(X)$ is faithfully flat over $R$, it follows that
\[\overline{I^{\dim(R)+n}}\subseteq I^n\] 
for each ideal $I$ and every integer $n\geq 1$.
\end{proof}

Recall that a noetherian local ring $(R,\frakm)$ is called \emph{generalized Cohen-Macaulay} if each local cohomology module $H^i_{\frakm}(R)$ has finite length for $i<\dim(R)$. It is proved in \cite[Theorem~1.1]{MaFInjectivity} that if $(R,\frakm)$ is generalized Cohen-Macaulay of prime characteristic then it is $F$-injective if and only if every parameter ideal is Frobenius closed. 

\begin{remark}
Let $(R,\frakm)$ be a noetherian local ring of prime characteristic $p$. Consider the flat local extension $R\to R(X)$ as in Remark \ref{rmk: basic on reduction and analytic spread}. Since $\frakm R(X)$ is the maximal ideal in $R(X)$, one has $H^j_{\frakm R(X)}(R(X))\cong H^j_\frakm(R)\otimes_R R(X)$ for each $j$. Hence $H^j_{\frakm R(X)}(R(X))$ is a finitely generated $R(X)$-module if and only $H^j_\frakm(R)$ is a finitely generated $R$-module. This shows that if $R$ is generalized Cohen-Macaulay then $R(X)$ is also generalized Cohen-Macaulay.
\end{remark}

Following the same strategy as in the proof of Corollary \ref{Skoda for F-pure}, one has the following:

\begin{corollary}
\label{cor: Skoda for F-inj and gCM}
Let $(R,\frakm)$ be an equidimensional and catenary local ring of characteristic $p$. Assume that $R$ is $F$-injective and generalized Cohen-Macaulay. Then
\[\overline{I^{\dim(R)+1}}\subseteq I.\]
for each ideal $I$.
\end{corollary}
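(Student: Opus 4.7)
The plan is to mimic the residue-field extension argument used in the proof of Corollary~\ref{Skoda for F-pure}: reduce to the infinite-residue-field case via the flat local extension $R\to R(X)$ of Remark~\ref{rmk: basic on reduction and analytic spread}, then invoke Theorem~\ref{uniform Skoda for I} in $R(X)$, and finally descend to $R$ by faithful flatness.

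First I would verify that $R(X)$ inherits every hypothesis needed to apply Theorem~\ref{uniform Skoda for I}. Equidimensionality and catenarity of $R(X)$ follow from the corresponding properties of $R$ (Remark~\ref{rmk: basic on reduction and analytic spread}), the residue field of $R(X)$ is infinite by construction, and $R(X)$ is generalized Cohen-Macaulay by the Remark immediately preceding the statement. The remaining point is that $R(X)$ is $F$-injective: since $R\to R(X)$ is faithfully flat with field closed fiber, the standard base-change argument (as already used for $F$-purity in the proof of Corollary~\ref{Skoda for F-pure} and for $F$-injectivity in the proof of Corollary~\ref{uniform BS for CM}) gives this. Now \cite[Theorem~1.1]{MaFInjectivity}, quoted just before the statement, implies that every parameter ideal of $R(X)$ is Frobenius closed.

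With these properties in hand, Theorem~\ref{uniform Skoda for I} applied to $R(X)$ yields
\[\overline{(IR(X))^{\dim(R(X))+1}}\subseteq IR(X).\]
Since $\dim(R(X))=\dim(R)$ and $R(X)$ is faithfully flat over $R$, intersecting with $R$ gives
\[\overline{I^{\dim(R)+1}}\subseteq \overline{(IR(X))^{\dim(R)+1}}\cap R\subseteq IR(X)\cap R=I,\]
which is the desired conclusion.

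I do not expect a serious obstacle here: every ingredient is supplied in the paper, and the argument is a direct adaptation of the residue-field extension device of Corollary~\ref{Skoda for F-pure} and Corollary~\ref{uniform BS for CM}. The only point requiring any verification beyond quoting previous results is that $R(X)$ remains generalized Cohen-Macaulay and $F$-injective, and both are addressed by the Remark preceding the statement together with the closed-fiber-is-a-field argument for $F$-singularities.
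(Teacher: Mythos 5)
Your proposal is correct and takes essentially the same approach as the paper: the paper simply says this corollary follows ``by the same strategy as in the proof of Corollary~\ref{Skoda for F-pure},'' and your write-up supplies exactly that reduction via $R\to R(X)$, the transfer of the hypotheses (including $F$-injectivity and generalized Cohen-Macaulayness) to $R(X)$, Ma's theorem, Theorem~\ref{uniform Skoda for I}, and faithfully flat descent.
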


We end this section with the following question:
\begin{question}
Let $(R,\frakm)$ be a noetherian local ring of prime characteristic $p$. Assume that $R$ is $F$-injective with an infinite residue field. Let $I$ be an ideal with analytic spread $\ell(I)$ and height $\height(I)$. Is it true that
\[\overline{I^{2\ell(I)-\height(I)+n}}\subseteq I^n\]
for each integer $n\geq 1$?

Or, is it true that $\overline{I^{\dim(R)+n}}\subseteq I^n$ for each ideal $I$ and each integer $n\geq 1$?
\end{question}

\section{Results in characteristic 0}
\label{results char 0}
In this section, we extend some of our results in \S\ref{section: main results in char p} to characteristic 0. We begin with the following counterpart of Corollaries \ref{Skoda for F-pure with generator} and \ref{Skoda for F-pure}.

\begin{theorem}
\label{Skoda for dense F-pure type}
Let $R$ be a local ring essentially of finite type over a field of characteristic 0. Assume that $R$ is of dense $F$-pure type. Then for every ideal $I$ with analytic spread $\ell(I)$ the following holds
\[\overline{I^{\ell(I)+n}}\subseteq I^n,\quad \forall n\geq 1,\]
In particular, $\overline{I^{\dim(R)+n}}\subseteq I^n$ for each integer $n\geq 1$.
\end{theorem}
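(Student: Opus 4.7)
The plan is to reduce to the characteristic-$p$ case of Theorem \ref{prop: integral and Frob closure} via the standard reduction-mod-$p$ machinery. Since $R$ is essentially of finite type over the characteristic-$0$ field $k$, its residue field contains $k$ and is therefore infinite, so by Remark \ref{rmk: basic on reduction and analytic spread} the ideal $I$ admits a minimal reduction $J=(g_1,\dots,g_\ell)$ with $\ell=\ell(I)$ generators. Since $\overline{I^m}=\overline{J^m}$ for every $m\ge 1$, it suffices to show $\overline{J^{\ell+n}}\subseteq I^n$. Fix $r\in\overline{J^{\ell+n}}$ together with an integral equation
\[r^s+a_1r^{s-1}+\cdots+a_s=0,\qquad a_i\in J^{(\ell+n)i}.\]

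I would then choose a finitely generated $\ZZ$-subalgebra $A\subseteq k$ over which $R$, $\frakm$, $I$, $J$, $r$, and the $a_i$ all descend to a model, producing $R_A$, ideals $I_A,J_A,\frakm_A\subseteq R_A$, and elements $r_A, a_{i,A}\in R_A$. By the hypothesis of dense $F$-pure type, after enlarging $A$ we may assume that a Zariski dense set $\Sigma$ of closed points $\mu\in\Spec A$ satisfies: the (localized) fiber $R_\mu$ is $F$-pure of characteristic $p>0$. For each $\mu\in\Sigma$, the integral equation descends to give $r_\mu\in\overline{J_\mu^{\ell+n}}$, and $J_\mu$ is still generated by $\ell$ elements. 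Since $R_\mu$ is $F$-pure (hence reduced, so every element of $R_\mu^{\circ}$ is a nonzerodivisor), Theorem \ref{prop: integral and Frob closure} yields $r_\mu\in J_\mu^n\subseteq I_\mu^n$ for every $\mu\in\Sigma$.

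To descend the conclusion back to $R$, I would show $r\in I^n+\frakm^N$ for every $N\ge 1$ and then invoke Krull's intersection theorem. Fix $N$; the Artinian quotient $R/(I^n+\frakm^N)$ is a finite-dimensional $k$-vector space, so after a further enlargement of $A$ the quotient $R_A/(I_A^n+\frakm_A^N)$ is a finitely generated $A$-module which, by generic flatness, may be assumed $A$-free. Writing the image of $r_A$ in a basis, all coordinates lie in $A$ and, by the previous paragraph, vanish modulo every $\mu\in\Sigma$. Since $A$ is a Jacobson domain and $\Sigma$ is Zariski dense in $\Spec A$, each coordinate must be zero, giving $r\in I^n+\frakm^N$. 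Letting $N\to\infty$ yields $r\in I^n$. The final assertion $\overline{I^{\dim(R)+n}}\subseteq I^n$ follows from $\ell(I)\le\dim(R)$.

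The chief obstacle is technical: one must carefully spread out the local ring $R$ (which is only essentially of finite type over $k$), its maximal ideal, and the defining data for the $F$-pure locus, so that generic flatness applies uniformly and the Jacobson-Nullstellensatz descent in the base is valid on the Zariski-dense $F$-pure subset $\Sigma$. These are routine but delicate manipulations in the Hochster--Huneke tight-closure-in-characteristic-zero framework.
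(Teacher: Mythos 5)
Your proposal is correct and takes essentially the same route as the paper: pass to a minimal reduction with $\ell(I)$ generators (using that the residue field is infinite), then apply the standard reduction-mod-$p$ argument together with Theorem \ref{prop: integral and Frob closure}. The paper's proof is merely a two-line sketch citing ``the standard reduction mod $p$ argument''; you have supplied the spreading-out, generic-flatness, and Jacobson-density details that this phrase compresses, and those details are sound.
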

\begin{proof}
Since residue field of $R$ is infinite, by replacing $I$ with a minimal reduction of $\ell(I)$ generators, we may assume that $I$ can be generated by $\ell=\ell(I)$ elements. Let $z\in \overline{I^{\ell+n}}$ be an arbitrary element. Since the condition $z\in \overline{I^{\ell+n}}$ is equational (given by the integral dependence equation), a combination of the standard reduction mod $p$ argument and Theorem \ref{prop: integral and Frob closure} shows that $z\in I^n$. This completes the proof.
\end{proof}

\begin{remark}
It is shown in \cite{HaraWatanabe} that if $R$ is a normal $\QQ$-Gorenstein local ring essentially of finite type over a field of characteristic 0 and is of dense $F$-pure type then $R$ is log canonical. To the best of our knowledge, it is an open problem whether a log canonical local ring essentially of finite type over a field of characteristic 0 is of dense $F$-pure type. Some partial results are known. 
\end{remark}

In light of Theorem \ref{Skoda for dense F-pure type}, we ask the following:
\begin{question}
\label{ques: Skoda for log canonical}
Let $R$ be a log canonical local ring essentially of finite type over a field of characteristic 0. Is it true that 
\[\overline{I^{\ell(I)+n}}\subseteq I^n,\quad \forall n\geq 1,\]
for every ideal $I$ in $R$?

Or, is it true that $\overline{I^{\dim(R)}+n}\subseteq I^n$ for each ideal $I$ and each integer $n\geq 1$?
\end{question}

Question \ref{ques: Skoda for log canonical} has a positive answer when $\dim(R)=2$ since in this case it is known that `log canonical' implies `dense $F$-pure type' (\cite{Watanabe1988, MehtaSrinivas} and \cite[2.6]{Takagi2013}).

Next we consider the counterpart of Theorem \ref{uniform BS for CM with infinite residue field}. The proof follows the same line of arguments in the one of Theorem \ref{Skoda for dense F-pure type} and is omitted.

\begin{theorem}
\label{uniform BS for dense F-inj type}
Let $R$ be a Cohen-Macaulay local ring essentially of finite type over a field of characteristic 0 that is of dense $F$-injective type. Then 
\[\overline{I^{\dim(R)+n}}\subseteq I^n\] 
for each ideal $I$ and every integer $n\geq 1$.
\end{theorem}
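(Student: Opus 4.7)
The plan is to mimic the reduction-mod-$p$ argument used in the proof of Theorem \ref{Skoda for dense F-pure type}, substituting Corollary \ref{uniform BS for CM} for Theorem \ref{prop: integral and Frob closure} at the characteristic $p$ step. Let $z\in \overline{I^{\dim(R)+n}}$, and fix an integral dependence equation $z^m+a_1z^{m-1}+\cdots+a_m=0$ with $a_j\in I^{j(\dim(R)+n)}$. The goal is to show $z\in I^n$, and since this conclusion and the integral relation are all captured by a finite amount of polynomial data over the base field $k$, standard spreading-out applies.

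First, I would choose a finitely generated $\ZZ$-subalgebra $A\subseteq k$ and a finitely generated flat $A$-algebra $R_A$ with $R_A\otimes_Ak\cong R$, carrying along an ideal $I_A\subseteq R_A$, an element $z_A\in R_A$, and coefficients $a_{j,A}\in I_A^{j(\dim(R)+n)}$ satisfying the integral equation over $R_A$. After shrinking $\Spec A$ if necessary, I can arrange via generic flatness and openness of the Cohen-Macaulay locus that $R_A$ is $A$-flat, that each closed-fiber $R_\mu:=R_A\otimes_A\kappa(\mu)$ is Cohen-Macaulay of the same dimension as $R$, and that the integral equation specializes to give $z_\mu\in\overline{I_\mu^{\dim(R_\mu)+n}}$.

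Second, the dense $F$-injective type hypothesis furnishes a Zariski-dense set of closed points $\mu\in\Spec A$ for which $R_\mu$ is $F$-injective; since $R_\mu$ is also Cohen-Macaulay, Corollary \ref{uniform BS for CM} yields
\[\overline{I_\mu^{\dim(R_\mu)+n}}\subseteq I_\mu^n,\]
so $z_\mu\in I_\mu^n$ for all such $\mu$. To descend this conclusion to characteristic 0 I would argue contrapositively: if $z\notin I^n$, then the image of $z_A$ in $R_A/I_A^n$ is nonzero, and generic freeness of $R_A/I_A^n$ as an $A$-module (after a further shrinking of $\Spec A$) guarantees that the image of $z_\mu$ in $R_\mu/I_\mu^n$ is nonzero for every closed point $\mu$ in a nonempty open subset of $\Spec A$. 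This open set meets the dense set of $F$-injective fibers, yielding a contradiction.

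The substantive work is all in the bookkeeping of the spread-out: ensuring that Cohen-Macaulayness, equidimensionality, the integral dependence data, and the non-membership $z\notin I^n$ descend to a model over $A$ and then specialize correctly to the fibers. This is exactly the standard reduction-mod-$p$ machinery of Hochster--Huneke that underlies the proof of Theorem \ref{Skoda for dense F-pure type}, and no new difficulty arises beyond invoking the Cohen-Macaulay $F$-injective Brian\c{c}on--Skoda bound in place of the $F$-pure one.
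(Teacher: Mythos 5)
Your proposal is correct and matches the approach the paper intends: the paper explicitly says the proof of Theorem \ref{uniform BS for dense F-inj type} follows the same reduction-mod-$p$ template as Theorem \ref{Skoda for dense F-pure type}, with Corollary \ref{uniform BS for CM} supplying the characteristic-$p$ input in place of Theorem \ref{prop: integral and Frob closure}. You correctly observe that no minimal-reduction step is needed here (since the bound is stated in terms of $\dim(R)$ rather than $\ell(I)$), and your spreading-out bookkeeping --- flatness of $R_A$ over $A$, openness of the Cohen-Macaulay locus, preservation of fiber dimension, specialization of the integral dependence equation, and generic freeness of $R_A/I_A^n$ to descend the non-membership $z\notin I^n$ --- is exactly the standard Hochster--Huneke machinery.
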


Conjecturally `dense $F$-injective type' is equivalent to `Du Bois'; it is known that `dense $F$-injective type' implies `Du Bois' (\cite{Schwede2019}). Hence it is natural to ask:
\begin{question}
Let $R$ be a Du Bois local ring essentially of finite type over a field of characteristic 0. Let $I$ be an ideal with analytic spread $\ell(I)$ and height $\height(I)$. Is it true that
\[\overline{I^{2\ell(I)-\height(I)+n}}\subseteq I^n\]
for each integer $n\geq 1$?

Or, is it true that $\overline{I^{\dim(R)+n}}\subseteq I^n$ for each ideal $I$ and each integer $n\geq 1$?
\end{question}

\end{document}